\documentclass[12pt]{article}

\usepackage{amsmath,amssymb,amsfonts,amsthm}
\usepackage[colorlinks]{hyperref}
\usepackage{graphicx}
\usepackage{authblk}
\usepackage{longtable}
\usepackage[a4paper, total={6in, 8in}]{geometry}
\usepackage{xcolor}
\usepackage{changes}

\graphicspath{{./figures/}}

\theoremstyle{definition}
\newtheorem{definition}{Definition}

\theoremstyle{theorem}
\newtheorem{theorem}{Theorem}

\theoremstyle{definition}
\newtheorem{example}{Example}

\theoremstyle{remark}
\newtheorem*{remark}{Remark}

\theoremstyle{theorem}
\newtheorem{lemma}{Lemma}

\theoremstyle{definition}

\makeatletter
\AddToHook{cmd/added/before}{\def\Changes@AuthorColor{blue}}
\AddToHook{cmd/deleted/before}{\def\Changes@AuthorColor{red}}
\AddToHook{cmd/replaced/before}{\def\Changes@AuthorColor{orange}}
\makeatother

\title{A Diophantine generalization of Fourier's irrationality trick}

\author{Archan Chattopadhyay and Chandan Chattopadhyay}

\date{}

\allowdisplaybreaks

\begin{document}

\maketitle

\abstract{We revisit Fourier’s method for proving the irrationality of $e$ and generalize it into a unified Diophantine framework, which applies to a broad class of series with bounded or mildly growing integer numerators, and suitably growing, divisible denominators. Using integer linear forms, we establish a general criterion that recovers classical irrationality results while also generating new ones within a single structural approach.}
\\\\
\textbf{Keywords: }Fourier's trick, series, proof of irrationality, integer linear forms, Diophantine approximation


\section{Introduction}
The problem of determining the irrationality of mathematical constants has a long and influential history in number theory. The earliest known result is traditionally attributed to the ancient Greek philosopher Hippasus, who in the fifth century BC proved the irrationality of $\sqrt 2$. In 1737, Euler first proved the irrationality of $e$ using continued fractions \cite{lang}. A similar approach was undertaken by Lambert in 1761 to produce the first rigorous proof of the irrationality of $\pi$. Fourier is usually credited with providing the first proof of irrationality of a constant defined by series. His classical 1815 proof of the irrationality of $e$ is particularly elegant and well-known, which we briefly review.

The late nineteenth and the last centuries witnessed a shift of focus from isolated constants to universal theorems that prove the irrationality of entire classes of numbers at once. Contributions by Dirichlet, Lindemann, Weierstrass, Cantor, Hilbert, Erd\H{o}s, and Ap\'ery, among others, shaped our modern understanding of the irrationality of numbers. Their efforts culminated in major (albeit technical) theorems in transcendence theory \cite{baker}.

In this article, we take on a more elementary, unified approach to irrationality. Instead of dealing with scattered, ad hoc proofs, we strive to understand the basic principles which distinguish irrationals from rationals. Our viewpoint on irrationality is not a pessimistic property of numbers---as it is often perceived by many---but a structural result of adding up infinitely many rationals. This leads naturally to the study of the irrationality of limits of convergent series. We show that under a set of sufficient conditions, these limits must be irrational. Our approach constitutes a framework which not only unifies standalone proofs, but also generates new irrational numbers. The class of series encompassed by our framework is exceptionally wide, but by no means exhaustive. There remains much to be known and proved. Since this unified framework builds directly upon Fourier's classical proof, it serves as an accessible enrichment topic for advanced high school calculus or introductory undergraduate number theory courses.

The main theorem (Theorem \ref{thm:main_theorem}) and its proof are based on the notion of integer linear forms, and a general irrationality criterion (Section \ref{sec:detour}). Our method proceeds by constructing these integer linear forms from partial sums of the series (Section \ref{sec:ilf}) and showing that they decay to zero (Section \ref{sec:decay}) without vanishing (Section \ref{sec:completion}). Section \ref{sec:applications} highlights the effectiveness of the framework by testing a few special series. Finally, Section \ref{sec:perspectives} offers directions and motivation for budding number theorists.

\subsection{A recap of Fourier's classical proof}
\begin{theorem}[Fourier]
$e$ is irrational.
\end{theorem}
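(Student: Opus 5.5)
We follow Fourier's classical argument, which proceeds by contradiction using the series $e=\sum_{k\ge 0} 1/k!$. Suppose $e=p/q$ with integers $p,q$ and $q\ge 1$; since $2<e<3$, $e$ is not an integer, so we may take $q\ge 2$. The idea is to multiply by $q!$ so that the finite part of the series becomes an integer. What remains is a tail that is strictly between $0$ and $1$, and no integer lies in that interval.

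Concretely, define
\[
x = q!\left(e-\sum_{k=0}^{q}\frac{1}{k!}\right).
\]
First, $x$ is an integer. Under the hypothesis, $q!\,e=(q-1)!\,p\in\mathbb{Z}$, and $q!/k!\in\mathbb{Z}$ for every $0\le k\le q$. This is the divisibility of the denominators $k!$ into $q!$, which is exactly the structural feature the later framework will generalize.

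Second, writing $x$ as the tail $x=\sum_{k>q} q!/k!$, every term is positive, so $x>0$. For the upper bound, $q!/k!=1/\bigl((q+1)(q+2)\cdots k\bigr)\le (q+1)^{-(k-q)}$. Comparing with a geometric series gives
\[
0<x\le \sum_{j\ge1}\frac{1}{(q+1)^j}=\frac{1}{q}<1 .
\]
Hence $x$ is an integer strictly between $0$ and $1$, a contradiction, and $e$ is irrational.

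No step is a real obstacle. The only point needing care is the tail estimate: the denominators must grow fast enough, relative to the multiplier $q!$, that the tail is strictly less than $1$. It is also essential that the tail is strictly positive, so it cannot vanish. These two features, decay without vanishing, are what the paper's general criterion isolates in Sections \ref{sec:decay} and \ref{sec:completion}.
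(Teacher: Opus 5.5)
Your proof is correct and is essentially the paper's own argument: multiply by $q!$, note that the tail $\sum_{k>q} q!/k!$ must be an integer, and bound it by the geometric series $\sum_{j\ge 1}(q+1)^{-j}=1/q$. The only cosmetic difference is that the paper uses the strict bound $<1/q$, which works for all $q\ge 1$, whereas your non-strict $\le 1/q$ needs the reduction to $q\ge 2$, and you justify that reduction correctly.
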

\begin{proof}
Consider the following ubiquitous series
\begin{equation}\label{eq:e}
e=1+1+\dfrac{1}{2!}+\dfrac{1}{3!}+\cdots=\sum_{k=0}^\infty\dfrac{1}{k!}.
\end{equation}
Assume towards a contradiction that $e$ is rational, that is, $e=\dfrac{p}{q}$ for some integers $p,q$. Since the above series is obviously positive, we may take $p,q>0$. Split the right-hand side of \eqref{eq:e} into two parts:
\begin{equation}\label{eq:p_q_split}
\dfrac{p}{q}=\sum_{k=0}^q\dfrac{1}{k!}+\sum_{k=q+1}^\infty\dfrac{1}{k!}.
\end{equation}
Multiply both sides by $q!$ to get
\begin{equation}\label{eq:q!e}
(q-1)!p=\sum_{k=0}^q\dfrac{q!}{k!}+\sum_{k=q+1}^\infty\dfrac{q!}{k!}.
\end{equation}
Realize that $(q-1)!p$ is an integer, and so is the first sum on the right. Hence, the second sum on the right ought to be an integer. However, notice that
\begin{equation}\label{eq:1_by_q}
\sum_{k=q+1}^\infty\dfrac{q!}{k!}=\sum_{j=1}^\infty\dfrac{q!}{(q+j)!}<\sum_{j=1}^\infty\dfrac{1}{(q+1)^j}=\dfrac{1}{q}.
\end{equation}
This dictates that the second sum on the right of \eqref{eq:q!e} lies strictly between $0$ and $1$, and thus cannot be an integer. Hence, $e$ cannot be rational.
\end{proof}

\subsection{More modern results}\label{sec:modern}
Following Fourier's approach, several generalizations have been obtained in more recent mathematics. Consider, for instance, the \emph{Cantor series} \cite{cantor} of a positive real number $x$,
\begin{equation*}
x=\dfrac{a_1}{c_1}+\dfrac{a_2}{c_1 c_2}+\dfrac{a_3}{c_1 c_2 c_3}+\cdots=\sum_{k=1}^\infty a_k\prod_{j=1}^k\dfrac{1}{c_j},
\end{equation*}
where $c_k\geq 2$ and $0\leq a_k<c_k$ are integers. The irrationality of $x$ is determined by \emph{Cantor's theorem}.
\begin{theorem}[Cantor]
$x$ is rational if and only if there exists a natural number $k^*$ such that for all $k\geq k^*$, $a_k=0$ or $a_k=c_k-1$.
\end{theorem}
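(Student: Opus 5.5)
We prove both directions of Cantor's theorem using the tail quantities
\[
T_n=\sum_{k=n+1}^\infty a_k\prod_{j=n+1}^{k}\frac{1}{c_j}=\left(c_1\cdots c_n\right)\left(x-\sum_{k=1}^n a_k\prod_{j=1}^k\frac{1}{c_j}\right),
\]
which play exactly the role of the second sum in \eqref{eq:q!e}. Since $0\le a_k\le c_k-1$, comparison with the telescoping series $\sum_{k>n}(c_k-1)/(c_{n+1}\cdots c_k)=1$ gives $0\le T_n\le 1$. Equality $T_n=0$ holds iff $a_k=0$ for all $k>n$, and $T_n=1$ holds iff $a_k=c_k-1$ for all $k>n$. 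We also have the recursion $T_n=(a_{n+1}+T_{n+1})/c_{n+1}$, that is, $c_{n+1}T_n=a_{n+1}+T_{n+1}$.

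\emph{Sufficiency.} Suppose that for $k\ge k^*$ each $a_k$ is $0$ or $c_k-1$. We do not get an eventually constant pattern, so we avoid the naive argument and use the recursion instead. If $a_{n+1}=0$ then $T_n=T_{n+1}/c_{n+1}$. If $a_{n+1}=c_{n+1}-1$ then $1-T_n=(1-T_{n+1})/c_{n+1}$. A direct rational conclusion still needs care, because the pattern may alternate.

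\emph{Correction.} The statement as quoted seems to need the standard hypothesis that every prime divides infinitely many $c_j$, and even then the true criterion is different. Take $c_k=2$: then $a_k\in\{0,1\}$ always, so the condition holds for every binary expansion, while irrational numbers exist. So the literal "if" direction is false. The correct classical statement, under the hypothesis that each prime divides infinitely many $c_j$, is: $x$ is rational iff eventually all $a_k=0$ or eventually all $a_k=c_k-1$. I would prove that version.

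\emph{Proof plan for the corrected version.}
\begin{itemize}
\item \emph{If.} When the digits are eventually $0$ or eventually $c_k-1$, the tail $T_n$ is $0$ or $1$ for large $n$. Then $x$ equals a finite sum of rationals, plus possibly $1/(c_1\cdots c_n)$, so $x$ is rational.
\item \emph{Only if.} Let $x=p/q$. Choose $n$ large enough that $q\mid c_1\cdots c_n$; this is possible because every prime power of $q$ eventually divides the growing products, by the hypothesis. Then $T_n$ is an integer, since it is $(c_1\cdots c_n)\,p/q$ minus an integer. As $0\le T_n\le 1$, we get $T_n\in\{0,1\}$. By the equality cases above, the digits are eventually all $0$ or eventually all $c_k-1$.
\end{itemize}

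\emph{Main obstacle.} The delicate step is the divisibility $q\mid c_1\cdots c_n$. This is exactly where the extra prime hypothesis is indispensable, and it is the analogue of multiplying by $q!$ in Fourier's proof. Without it the "only if" direction also fails; for example, $c_k=2$ and $x=1/3$ has binary digits alternating $0,1$. So in the write-up I would first state the hypothesis explicitly, then run the Fourier-style integrality argument.
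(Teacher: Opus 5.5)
Your proposal is correct, and so is your diagnosis of the statement. The paper gives no proof here; it only quotes Cantor, and the quoted version is defective in two ways. It omits Cantor's standing hypothesis that every prime divides infinitely many $c_k$. It also places the disjunction ``$a_k=0$ or $a_k=c_k-1$'' inside the quantifier ``for all $k\geq k^*$'', where it should sit outside.

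Your two counterexamples are valid. With $c_k\equiv 2$, every binary expansion satisfies the condition, so the literal ``if'' direction fails. With $1/3$ in base $2$ (digits $0,1,0,1,\ldots$), the ``only if'' direction of the block reading fails when the prime hypothesis is missing.

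Your proof of the corrected statement is complete. The telescoping bound $0\leq T_n\leq 1$ and its two equality cases, together with integrality of $T_n$ once $q\mid c_1\cdots c_n$, are exactly what is needed.

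Note that $T_n$ is precisely the paper's integer linear form $\Lambda_n=b_nX^{(>n)}$ with $b_n=c_1\cdots c_n$. Theorem~\ref{thm:main_theorem} contradicts $\Lambda_n\in\frac{1}{q}\mathbb{Z}$ by showing $0<|\Lambda_n|\to 0$. That route costs the growth condition $c_k\to\infty$ and bounded nonzero numerators. You instead make $\Lambda_n$ a genuine integer through the divisibility hypothesis. That does not require $c_k\to\infty$, but it does use the sharp digit range $0\leq a_k<c_k$ to trap $T_n$ in $\{0,1\}$. This difference in mechanism is the structural source of the incomparability discussed in Example~\ref{ex:disparity}.

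Two small points for the write-up. First, your remark that the per-$k$ reading fails ``even then'' is true but not yet supported, since $c_k\equiv 2$ violates the prime hypothesis. A clean witness is $c_k=k+1$, with $a_k=0$ for odd $k$ and $a_k=k=c_k-1$ for even $k$. Because $k/(k+1)!=1/k!-1/(k+1)!$, this series sums to $\sum_{j\geq 2}(-1)^j/j!=1/e$. That number is irrational, although every prime divides infinitely many $c_k$ and every digit is $0$ or $c_k-1$. Second, drop the abandoned ``Sufficiency'' paragraph, since it was working toward the false literal statement.
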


Subsequent developments include the \emph{Engel series} \cite{engel} of a positive real number $x$,
\begin{equation}\label{eq:engel}
x=\dfrac{1}{c_1}+\dfrac{1}{c_1 c_2}+\dfrac{1}{c_1 c_2 c_3}+\cdots=\sum_{k=1}^\infty\prod_{j=1}^k\dfrac{1}{c_j},
\end{equation}
where the $c_k$ form a nondecreasing sequence of natural numbers and are algorithmically generated from $x$. This expansion is unique; it terminates for rational numbers and is perpetual for irrational numbers. A quick glance at \eqref{eq:e} and \eqref{eq:engel} reveals that $e$ has the Engel expansion $1,1,2,3,4,5,6,7,8,\dots$. An alternating-series version,
\begin{equation*}
x=\dfrac{1}{c_1}-\dfrac{1}{c_1 c_2}+\dfrac{1}{c_1 c_2 c_3}-\cdots=\sum_{k=1}^\infty (-1)^{k-1}\prod_{j=1}^k\dfrac{1}{c_j},
\end{equation*}
called the \emph{Pierce series} \cite{pierce}, applies to real numbers $x$ lying strictly between $0$ and $1$.

\section{A Diophantine detour}\label{sec:detour}
In number theory, \emph{Diophantine approximation} involves the study of how well real numbers can be ``approximated" by rational numbers. Sufficiently good rational approximations (or lack thereof) often imply that a number is irrational (or rational). To this extent, we first define the notion of a linear form.
\begin{definition}
A \emph{linear form} in $x$ and $1$ (or simply $x$) is an expression of the form
\begin{equation*}
\alpha\cdot x+\beta\cdot 1,
\end{equation*}
where $\alpha$ and $\beta$ are real numbers. If they are integers, the resulting expression is called an \emph{integer linear form} in $x$.
\end{definition}
Now we state a general irrationality criterion (see \cite[Theorem 1.5]{duverney}) which we shall use in Section \ref{sec:completion}.
\begin{theorem}\label{thm:irrationality_criterion}
Let $x\in\mathbb{R}$. Suppose there exist infinite sequences of integers $(\alpha_m)$ and $(\beta_m)$, with $\alpha_m\neq 0$ for all $m$, such that for the integer linear form $\lambda_m:=\alpha_m x-\beta_m$,
\begin{equation}\label{eq:irrationality_theorem}
0<|\lambda_m|\leq\epsilon_m,
\end{equation}
where $\displaystyle{\lim_{m\to\infty}}\epsilon_m=0$. Then, $x$ is irrational.
\end{theorem}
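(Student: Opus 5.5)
The plan is a proof by contradiction: suppose $x$ is rational, and show that the integer linear forms $\lambda_m$ cannot then be both nonzero and arbitrarily small. So assume $x=\dfrac{p}{q}$ with $p,q\in\mathbb{Z}$ and $q>0$. For each $m$ we then have
\begin{equation*}
q\lambda_m = q\alpha_m x - q\beta_m = \alpha_m p - q\beta_m .
\end{equation*}
The right-hand side is an integer, since $\alpha_m,\beta_m,p,q$ are all integers. By the hypothesis $0<|\lambda_m|$, this integer is nonzero, so $|q\lambda_m|\geq 1$. That is, $|\lambda_m|\geq \dfrac{1}{q}$ for every $m$.

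Next, use the upper bound in \eqref{eq:irrationality_theorem}. Since $\epsilon_m\to 0$, we can choose $m$ large enough that $\epsilon_m<\dfrac{1}{q}$. For this $m$,
\begin{equation*}
\dfrac{1}{q}\leq|\lambda_m|\leq\epsilon_m<\dfrac{1}{q},
\end{equation*}
which is a contradiction. Hence $x$ is irrational.

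There is no real obstacle here. The essential point is that a rational $x$ forces every integer linear form in $x$ to lie in the discrete set $\dfrac{1}{q}\mathbb{Z}$. Once we know $\lambda_m\neq 0$, it is therefore bounded away from zero by $\dfrac{1}{q}$, independently of $m$. The nonvanishing condition $|\lambda_m|>0$ is what makes the argument work, and in applications it is the hard part to verify; in this criterion it is simply assumed. The hypothesis $\alpha_m\neq 0$ is not needed for the contradiction itself. It only ensures the forms genuinely involve $x$, and it follows anyway from $\lambda_m\neq 0$ together with $|\lambda_m|\to 0$, because $\alpha_m=0$ would force $|\beta_m|\geq 1$.
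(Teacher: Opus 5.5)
Your proof is correct and is essentially the paper's argument: write $x=p/q$, note that $q\lambda_m=\alpha_m p-\beta_m q$ is a nonzero integer so $|\lambda_m|\geq 1/q$, then contradict $\epsilon_m\to 0$. Your side remark is also correct: $\alpha_m\neq 0$ is not used in the contradiction and holds automatically for all sufficiently large $m$.
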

\begin{proof}
Assume $x=\dfrac{p}{q}\in\mathbb{Q}$ is in lowest terms. Then,
\begin{equation*}
|\lambda_m|=\dfrac{|\alpha_m p-\beta_m q|}{|q|}.
\end{equation*}
Hence, for every $m$, either $|\lambda_m|=0$ or
\begin{equation}\label{eq:contradiction}
|\lambda_m|\geq\dfrac{1}{|q|}.
\end{equation}
By \eqref{eq:irrationality_theorem}, $|\lambda_m|\leq\epsilon_m\to 0$ as $m\to\infty$. So, for all sufficiently large $m$, we have $|\lambda_m|\leq\epsilon_m<\dfrac{1}{|q|}$, contradicting \eqref{eq:contradiction}. Therefore, $x$ cannot be rational.
\end{proof}

\section{Sufficient conditions for irrationality (main theorem)}\label{sec:constraints}
Now we come to the heart of this article and state the main theorem.
\begin{theorem}\label{thm:main_theorem}
Let $X$ (the constant under consideration) be defined by the series
\begin{equation}\label{eq:C_n}
X:=\sum_{k=0}^\infty\dfrac{a_k}{b_k},
\end{equation}
where the $a_k$ and $b_k$ are subject to the following constraints:
\begin{enumerate}
\item $a_k\in\mathbb{Z}\backslash\lbrace 0\rbrace$ and $b_k\in\mathbb{N}$.
\item $|a_k|\leq a^*$ for all $k$ and some $a^*\in\mathbb{N}$ (in other words, $a_k=O(1)$).
\item $b_k\mid b_{k+1}$ and $b_k<b_{k+1}$.
\item $\displaystyle{\lim_{k\to\infty}}\dfrac{b_k}{b_{k+1}}=0$ (in other words, $\dfrac{b_k}{b_{k+1}}=o(1)$).
\end{enumerate}
Then, $X$ converges absolutely and is irrational.
\end{theorem}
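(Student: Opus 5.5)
The plan is to build the integer linear forms directly from the partial sums, exactly as in Fourier's proof, and then apply Theorem~\ref{thm:irrationality_criterion}. First I settle absolute convergence. Since $b_k\mid b_{k+1}$ and $b_k<b_{k+1}$, the quotient $b_{k+1}/b_k$ is an integer at least $2$. Hence $b_{k+j}\geq 2^{j}b_k$ for all $j\geq 0$, and in particular $b_k\geq 2^k b_0\geq 2^k$. Together with $|a_k|\leq a^*$ this gives $\sum_k |a_k|/b_k\leq a^*\sum_k 2^{-k}<\infty$.

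Next, for each $m$ I set $\alpha_m:=b_m$ and
\begin{equation*}
\beta_m:=\sum_{k=0}^m a_k\dfrac{b_m}{b_k}.
\end{equation*}
By the divisibility chain, $b_k\mid b_m$ for $k\leq m$, so $\beta_m\in\mathbb{Z}$, and $\alpha_m\neq 0$. The integer linear form is then the tail
\begin{equation*}
\lambda_m=\alpha_m X-\beta_m=\sum_{k=m+1}^\infty a_k\dfrac{b_m}{b_k}.
\end{equation*}
For the decay, I use $b_{m+j}\geq 2^{j-1}b_{m+1}$ to get
\begin{equation*}
|\lambda_m|\leq a^*\dfrac{b_m}{b_{m+1}}\sum_{j\geq 1}2^{-(j-1)}=2a^*\dfrac{b_m}{b_{m+1}}=:\epsilon_m .
\end{equation*}
By condition 4, $\epsilon_m\to 0$.

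The main obstacle is the nonvanishing $\lambda_m\neq 0$. This is where condition 4 and $a_k\neq 0$ are both essential. I isolate the leading term $a_{m+1}b_m/b_{m+1}$, whose absolute value is at least $b_m/b_{m+1}$ since $|a_{m+1}|\geq 1$. The same geometric estimate as above, started at index $m+2$, bounds the remainder by $2a^*\,b_m/b_{m+2}$. Therefore
\begin{equation*}
|\lambda_m|\geq \dfrac{b_m}{b_{m+1}}\left(1-2a^*\dfrac{b_{m+1}}{b_{m+2}}\right).
\end{equation*}
By condition 4 there is $m_0$ such that $b_{m+1}/b_{m+2}<1/(2a^*)$ for all $m\geq m_0$, so $|\lambda_m|>0$ there.

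Restricting to the sequences $(\alpha_m)_{m\geq m_0}$ and $(\beta_m)_{m\geq m_0}$, we have $0<|\lambda_m|\leq\epsilon_m\to 0$. Theorem~\ref{thm:irrationality_criterion} then gives that $X$ is irrational.
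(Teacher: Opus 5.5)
Your proof is correct and follows the paper's route. It uses the same integer linear forms $b_mX-b_mX^{(\le m)}=b_mX^{(>m)}$, the same irrationality criterion, and the same nonvanishing argument: isolate the leading term $a_{m+1}b_m/b_{m+1}$ and show the rest of the tail is $o(1)$ times it. The only difference is in the tail estimates: you pull out $b_m/b_{m+1}$ (resp.\ $b_m/b_{m+2}$) and use $b_{k+1}/b_k\ge 2$ on the remaining geometric series, which spares you the paper's auxiliary nondecreasing minorant $f(m)=\min_{j\ge m}b_{j+1}/b_j$.
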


\begin{remark}\label{cor:main_theorem}
All but the first of the above hypotheses need only hold for all sufficiently large $k$, since discarding finitely many terms only changes $X$ by a rational number. If $a_k=0$ for some $k$, the corresponding terms may be deleted; provided infinitely many nonzero terms remain, the above hypotheses hold along the surviving subsequence. Indeed, if we denote the surviving denominators by $b_{k_j}$, then $b_{k_j}\mid b_{{k_j+1}}\mid b_{k_{j+1}}$, $b_{k_j}<b_{{k_j+1}}\leq b_{k_{j+1}}$ and $\dfrac{b_{k_j}}{b_{k_{j+1}}}\leq\dfrac{b_{k_j}}{b_{{k_j}+1}}=o(1)$.
\end{remark}

Observe how each hypothesis applies to and generalizes \eqref{eq:e}. (For more information on the $O$- and $o$-notation, refer to \cite[Appendix A.2]{flajolet}.) Absolute convergence \cite[Section 10.18]{apostol_calculus} follows from a direct comparison test. By the third hypothesis, $\dfrac{b_{k+1}}{b_k}\geq 2$, implying $b_k\geq 2^k$. Therefore, by the second hypothesis, $\left|\dfrac{a_k}{b_k}\right|\leq\dfrac{a^*}{2^k}$, and $\displaystyle{\sum_{k=0}^\infty}\left|\dfrac{a_k}{b_k}\right|\leq a^*\displaystyle{\sum_{k=0}^\infty}\left(\dfrac{1}{2}\right)^k=2a^*$.

We shall now commence the proof of the irrationality of $X$.

\section{Construction of integer linear forms}\label{sec:ilf}
Let $m\in\mathbb{N}_0$ (which is the set of whole numbers including zero). Split $X$ into two parts:
\begin{equation}\label{eq:C_n_split}
X=X^{(\leq m)}+X^{(>m)},
\end{equation}
where
\begin{equation}\label{eq:split_halves}
X^{(\leq m)}:=\sum_{k=0}^m\dfrac{a_k}{b_k},\quad X^{(>m)}:=\sum_{k=m+1}^\infty\dfrac{a_k}{b_k}.
\end{equation}
This step is reminiscent of \eqref{eq:p_q_split}.
\begin{lemma}
With
\begin{equation}\label{eq:A_m_B_m}
A_m:=b_m\in\mathbb{N},\quad B_m:=b_m X^{(\leq m)}\in\mathbb{Z},
\end{equation}
we have the integer linear form
\begin{equation}\label{eq:integer_linear_form}
\Lambda_m:=A_m X-B_m=b_m X^{(>m)}.
\end{equation}
\end{lemma}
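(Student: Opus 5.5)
The lemma asserts three things, and I will verify them in turn. First, $A_m=b_m$ is a natural number. Second, $B_m=b_mX^{(\leq m)}$ is an integer. Third, the identity $A_mX-B_m=b_mX^{(>m)}$ holds.

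The first claim is immediate from hypothesis 1 of Theorem \ref{thm:main_theorem}, since $b_m\in\mathbb{N}$.

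For the identity, I will multiply the split \eqref{eq:C_n_split} by $b_m$, which gives $b_mX=b_mX^{(\leq m)}+b_mX^{(>m)}$. Moving $B_m$ to the left yields \eqref{eq:integer_linear_form}. This step is legitimate because $X$ converges absolutely, as noted after the theorem. Hence the tail $X^{(>m)}$ is a well-defined real number, and multiplying a convergent series by a constant is harmless.

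The only substantive point is integrality of $B_m$. Writing
\begin{equation*}
B_m=\sum_{k=0}^m a_k\,\dfrac{b_m}{b_k},
\end{equation*}
it suffices to show $b_k\mid b_m$ for every $0\leq k\leq m$. I will argue by induction on $m-k$, using hypothesis 3 ($b_k\mid b_{k+1}$) and transitivity of divisibility: $b_k\mid b_{k+1}\mid\cdots\mid b_m$. Then each quotient $b_m/b_k$ is a positive integer. Since each $a_k\in\mathbb{Z}$, every summand is an integer, and so $B_m\in\mathbb{Z}$. This mirrors the observation in Fourier's proof that $\sum_{k\le q} q!/k!$ is an integer.

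I expect no real obstacle here. The one point needing care is the iterated divisibility chain. If, following the Remark, hypothesis 3 only holds for sufficiently large $k$, the chain breaks for small $k$. In that case I would first discard the finitely many initial terms, which changes $X$ only by a rational number, and apply the argument to the remaining series.
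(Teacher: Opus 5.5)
Your proposal is correct and follows the paper's proof: multiply the split $X=X^{(\leq m)}+X^{(>m)}$ by $b_m$, then use the chain $b_k\mid b_{k+1}\mid\cdots\mid b_m$ from the third hypothesis, which gives $b_m/b_k\in\mathbb{N}$ and hence $B_m\in\mathbb{Z}$. Your remarks on absolute convergence and on discarding finitely many initial terms are sound but not needed here, since the lemma is stated under the full hypotheses of Theorem \ref{thm:main_theorem}.
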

\begin{proof}
Multiply \eqref{eq:C_n} by $b_m$ and use \eqref{eq:C_n_split}. The choice \eqref{eq:A_m_B_m} guarantees every $B_m$ is an integer (by the third hypothesis in Theorem \ref{thm:main_theorem}, $b_m$ clears all denominators $b_k$ for $k\leq m$).
\end{proof}
The multiplication by $b_m$ reminds us of \eqref{eq:q!e}.

\section{Decay of the integer linear forms}\label{sec:decay}
From \eqref{eq:split_halves} and \eqref{eq:integer_linear_form}, using the triangle inequality, we have
\begin{equation*}
|\Lambda_m|\leq b_m\sum_{k=m+1}^\infty\dfrac{|a_k|}{b_k}.
\end{equation*}
Setting $j=k-m$, we get
\begin{equation}\label{eq:Lambda_m_decay}
|\Lambda_m|\leq\sum_{j=1}^\infty |a_{m+j}|\dfrac{b_m}{b_{m+j}}.
\end{equation}
We now come to a crucial lemma.
\begin{lemma}\label{lem:f(m)}
There exists a nondecreasing and unbounded function $f:\mathbb{N}_0\to\mathbb{N}\backslash\lbrace 1\rbrace$ such that
\begin{equation}
\dfrac{b_m}{b_{m+j}}\leq\dfrac{1}{f(m)^j},\quad j\geq 1.
\end{equation}
\end{lemma}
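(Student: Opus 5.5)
We need $f(m)\ge 2$, nondecreasing and unbounded, with $b_{m+j}\ge f(m)^j b_m$ for every $j\ge1$. Write $r_k:=b_{k+1}/b_k$. By the third hypothesis of Theorem \ref{thm:main_theorem}, each $r_k$ is an integer with $r_k\ge 2$. By the fourth hypothesis, $r_k\to\infty$. The plan is to take for $f(m)$ the running infimum of the ratios from index $m$ onward,
\begin{equation*}
f(m):=\min_{k\geq m} r_k .
\end{equation*}

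First I check that $f$ is well defined and lands in $\mathbb{N}\backslash\lbrace 1\rbrace$. Since $r_k\to\infty$, only finitely many $r_k$ with $k\ge m$ lie below any given bound, for instance $r_m$. Hence the infimum over $k\ge m$ is attained and is a genuine minimum of integers. It is at least $2$ because every $r_k\ge 2$.

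Next, $f$ is nondecreasing, because $f(m+1)$ is a minimum over a subset of the indices used for $f(m)$. It is unbounded: given $N$, choose $K$ with $r_k\ge N$ for all $k\ge K$; then $f(m)\ge N$ for all $m\ge K$.

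Finally, the inequality follows by telescoping:
\begin{equation*}
\frac{b_{m+j}}{b_m}=\prod_{i=0}^{j-1} r_{m+i}\geq f(m)^j .
\end{equation*}
This holds because each factor $r_{m+i}$ has index $m+i\ge m$, so $r_{m+i}\ge f(m)$. Taking reciprocals gives the claim.

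The only delicate point is that the minimum exists and is an integer. This is exactly where integrality of the ratios (from divisibility) and their divergence are both used. Without divisibility one would use $\lfloor\inf_{k\ge m} r_k\rfloor$ instead, and the strictness $b_k<b_{k+1}$ together with divisibility supplies the lower bound $2$.
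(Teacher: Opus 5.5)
Your proof is correct and is essentially the paper's own argument: it likewise defines $f(m)=\min_{k\ge m} b_{k+1}/b_k$ and bounds the telescoped product of consecutive ratios factor by factor. One small aside: the minimum already exists by well-ordering of the positive integers, so the divergence of the ratios is needed only for unboundedness of $f$, not for the minimum being attained.
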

\begin{proof}
By the third and fourth hypotheses in Theorem \ref{thm:main_theorem}, there exists a sequence
\begin{equation*}
g(k):=\dfrac{b_{k+1}}{b_k}\in\mathbb{N}\backslash\lbrace 1\rbrace,
\end{equation*}
with $\displaystyle{\lim_{k\to\infty}}g(k)=\infty$. Define
\begin{equation*}
f(k):=\min_{j\geq k} g(j).
\end{equation*}
Then, $f(k)\in\mathbb{N}\backslash\lbrace 1\rbrace$ is nondecreasing and unbounded. Moreover, $f(k)\leq g(k)$, so
\begin{equation*}
\dfrac{b_k}{b_{k+1}}\leq\dfrac{1}{f(k)}.
\end{equation*}
This yields
\begin{equation*}
\dfrac{b_m}{b_{m+j}}=\prod_{i=1}^j\dfrac{b_{m+i-1}}{b_{m+i}}\leq\prod_{i=1}^j\dfrac{1}{f(m+i-1)}\leq\dfrac{1}{f(m)^j}.
\end{equation*}
\end{proof}
By the second hypothesis in Theorem \ref{thm:main_theorem}, $|a_{m+j}|\leq a^*$. Hence, from \eqref{eq:Lambda_m_decay},
\begin{equation}\label{eq:Lambda_m_f_m}
|\Lambda_m|\leq a^*\sum_{j=1}^\infty\dfrac{1}{f(m)^j}=\dfrac{a^*}{f(m)-1}.
\end{equation}
The above inequality is in the spirit of \eqref{eq:1_by_q}. With this, we arrive at the following result.
\begin{lemma}
With
\begin{equation}\label{eq:delta_m}
\delta_m:=\dfrac{a^*}{f(m)-1},
\end{equation}
we get
\begin{equation}\label{eq:Lambda_m_delta_m}
|\Lambda_m|\leq\delta_m,\quad\lim_{m\to\infty}\delta_m=0.
\end{equation}
\end{lemma}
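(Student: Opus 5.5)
The bound $|\Lambda_m|\leq\delta_m$ is already contained in \eqref{eq:Lambda_m_f_m}, so only the claim $\delta_m\to 0$ needs an argument. The plan has two steps: first check that $\delta_m$ is a well-defined positive real number, and then show that it tends to zero.

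\textbf{Definition of $\delta_m$.} By Lemma \ref{lem:f(m)}, $f(m)\in\mathbb{N}\backslash\lbrace 1\rbrace$, so $f(m)\geq 2$ and $f(m)-1\geq 1$. Hence $\delta_m=a^*/(f(m)-1)$ is a well-defined positive real number with $\delta_m\leq a^*$.

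\textbf{The bound.} Lemma \ref{lem:f(m)} and the second hypothesis of Theorem \ref{thm:main_theorem} turn \eqref{eq:Lambda_m_decay} into
\[
|\Lambda_m|\leq a^*\sum_{j\geq 1}f(m)^{-j}.
\]
This geometric series converges, since its ratio is $1/f(m)\leq 1/2$. It sums to $a^*/(f(m)-1)=\delta_m$, which is exactly \eqref{eq:Lambda_m_f_m}.

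\textbf{The limit.} Lemma \ref{lem:f(m)} says $f$ is nondecreasing and unbounded, and together these give $f(m)\to\infty$. To see this, fix $M$. By unboundedness there is some $m_0$ with $f(m_0)>M$, and by monotonicity $f(m)\geq f(m_0)>M$ for every $m\geq m_0$. Now let $\varepsilon>0$ and take $M=1+a^*/\varepsilon$. For all $m\geq m_0$ we then have $f(m)-1>a^*/\varepsilon$, that is, $\delta_m<\varepsilon$. So $\delta_m\to 0$, and since $a^*$ is a fixed constant, nothing else is involved.

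There is no real obstacle here. The only point needing care is that unboundedness alone would not give $f(m)\to\infty$; it is the monotonicity of $f$ that upgrades it to divergence. That monotonicity was built in by defining $f$ as a tail minimum of $g$, which is what makes the limit step work.
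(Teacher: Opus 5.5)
Your proposal is correct and follows the paper's route. The bound $|\Lambda_m|\le\delta_m$ is just the geometric-series estimate $|\Lambda_m|\le a^*/(f(m)-1)$ already derived before the lemma, and $\delta_m\to 0$ follows because $f(m)\to\infty$. The paper's one-line proof cites only the unboundedness of $f$, so your explicit use of monotonicity to get $f(m)\to\infty$ is a small but welcome tightening.
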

\begin{proof}
The result follows immediately from \eqref{eq:delta_m} since $f$ is unbounded.
\end{proof}

\section{Completion of the proof}\label{sec:completion}
Before claiming that $X$ is irrational, we must cater to the following essential nonvanishing requirement. Without this condition, the irrationality criterion would fail, since vanishing linear forms may occur even for rational numbers.
\begin{lemma}\label{lem:nonvanishing}
Under the hypotheses of Theorem \ref{thm:main_theorem}, $\Lambda_m\neq 0$ for all sufficiently large $m$.
\end{lemma}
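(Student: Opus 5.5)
We must show $\Lambda_m=b_mX^{(>m)}\neq 0$ for all large $m$. Since $b_m>0$, it suffices to show $X^{(>m)}\neq 0$. The natural approach is to isolate the leading term of the tail and show that it dominates everything after it. Write
\begin{equation*}
\Lambda_m=a_{m+1}\dfrac{b_m}{b_{m+1}}+\sum_{j=2}^\infty a_{m+j}\dfrac{b_m}{b_{m+j}},
\end{equation*}
or, more conveniently, multiply through by $b_{m+1}/b_m$ and study
\begin{equation*}
\dfrac{b_{m+1}}{b_m}\Lambda_m=a_{m+1}+b_{m+1}X^{(>m+1)}=a_{m+1}+\Lambda_{m+1}.
\end{equation*}
This identity is the key. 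By the first hypothesis of Theorem \ref{thm:main_theorem}, $a_{m+1}$ is a nonzero integer, so $|a_{m+1}|\geq 1$.

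By \eqref{eq:Lambda_m_delta_m}, applied at index $m+1$, we have $|\Lambda_{m+1}|\leq\delta_{m+1}=a^*/(f(m+1)-1)$. Since $f$ is nondecreasing and unbounded (Lemma \ref{lem:f(m)}), there is $m_0$ with $f(m+1)>a^*+1$ for all $m\geq m_0$. For such $m$ this gives $|\Lambda_{m+1}|<1\leq|a_{m+1}|$. The reverse triangle inequality then yields
\begin{equation*}
\left|\dfrac{b_{m+1}}{b_m}\Lambda_m\right|\geq|a_{m+1}|-|\Lambda_{m+1}|>0,
\end{equation*}
so $\Lambda_m\neq 0$ for all $m\geq m_0$.

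The only delicate point is that the bound on the tail must be strict and strictly smaller than $1$, so that it cannot cancel an integer of absolute value at least $1$. This is exactly where the fourth hypothesis (via the unboundedness of $f$) is needed, beyond mere divisibility. It is also why the statement says ``sufficiently large $m$'': for small $m$ the geometric bound $a^*/(f(m)-1)$ may exceed $1$ and cancellation is genuinely possible.

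Together with \eqref{eq:Lambda_m_delta_m}, this supplies the condition $0<|\Lambda_m|\leq\delta_m\to 0$ with $A_m=b_m\neq 0$. Theorem \ref{thm:irrationality_criterion}, applied along $m\geq m_0$, then finishes the proof of Theorem \ref{thm:main_theorem}.
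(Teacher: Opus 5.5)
Your proof is correct and follows essentially the paper's argument: factor out $b_m/b_{m+1}$, bound the remainder by $a^*/(f(m+1)-1)<1$ for large $m$, and use the reverse triangle inequality against $|a_{m+1}|\geq 1$. The only cosmetic difference is that you recognize the paper's remainder $S_m$ as $\Lambda_{m+1}$, so you can cite the decay bound at index $m+1$ rather than re-deriving it.
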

\begin{proof}
From \eqref{eq:split_halves} and \eqref{eq:integer_linear_form}, we have
\begin{equation*}
\Lambda_m=\dfrac{b_m}{b_{m+1}}\left(a_{m+1}+\sum_{j=2}^\infty a_{m+j}\dfrac{b_{m+1}}{b_{m+j}}\right).
\end{equation*}
Since $a_{m+1}$ is a nonzero integer, $|a_{m+1}|\geq 1$. To show $\Lambda_m\neq 0$, we must show that the term in the parentheses is nonzero. Let
\begin{equation*}
S_m:=\sum_{j=2}^\infty a_{m+j}\dfrac{b_{m+1}}{b_{m+j}}.
\end{equation*}
In direct analogy with \eqref{eq:Lambda_m_f_m}, we obtain
\begin{equation*}
|S_m|\leq\dfrac{a^*}{f(m+1)-1}.
\end{equation*}
Since $f$ grows unbounded, there exists a threshold $m^*$ such that for all $m\geq m^*$, $|S_m|<1$. For those $m$, the reverse triangle inequality yields
\begin{equation*}
|a_{m+1}+S_m|\geq |a_{m+1}|-|S_m|>1-1=0.
\end{equation*}
This shows that $\Lambda_m\neq 0$ for all sufficiently large $m$.
\end{proof}

Finally, from \eqref{eq:integer_linear_form}, \eqref{eq:Lambda_m_delta_m} and the above lemma, we see that $X$ meets all the conditions in Theorem \ref{thm:irrationality_criterion}. We thus conclude that $X$ is irrational. This completes the proof of Theorem \ref{thm:main_theorem}.

\section{Applications}\label{sec:applications}
We illustrate the utility and versatility of the developed framework by selecting a few explicit series. Of course, basic numbers such as $\sin 1$, $\cos 1$, $\sinh 1$ and $\cosh 1$ may be easily proved irrational using our method (see \cite{osler,spiegel} for similar proof techniques). Here, however, we shall dive into more advanced examples.

\begin{example}\label{ex:double_factorial}
Consider the series
\begin{equation}\label{eq:double_factorial}
1+\dfrac{1}{3}+\dfrac{1}{3\cdot 5}+\dfrac{1}{3\cdot 5\cdot 7}+\cdots=\sum_{k=0}^\infty\dfrac{1}{(2k+1)!!}.
\end{equation}
In this case, we have $a_k\equiv 1$, $b_k=(2k+1)!!$. Clearly, $b_k\mid b_{k+1}$, and $\dfrac{b_k}{b_{k+1}}=\dfrac{1}{2k+3}=o(1)$. Hence, this constant is irrational. In fact, it has the following closed-form expression:
\begin{equation*}
\sum_{k=0}^\infty\dfrac{1}{(2k+1)!!}=\sqrt{\dfrac{e\pi}{2}}\operatorname{erf}\left(\dfrac{1}{\sqrt{2}}\right),
\end{equation*}
where $\operatorname{erf}$ is the error function \cite[(7.1.1)]{abramowitz}. Although, in this case, irrationality may also be proved via the Engel series \eqref{eq:engel}, our framework does so in a direct and systematic (to put it another way: ``plug-and-play") manner.

Figure \ref{fig:double_factorial} shows a semilog plot of $|\Lambda_m|$ vs.\ $m$ (for $m\leq 1000$) for the series \eqref{eq:double_factorial}, in accordance with \eqref{eq:integer_linear_form}. Observe how the magnitude of the integer linear form $\Lambda_m$ steadily decays towards zero but does not vanish, illustrating the mechanism underlying the irrationality proof.

\begin{figure}[h]
\centering
\includegraphics[width=0.6\textwidth]{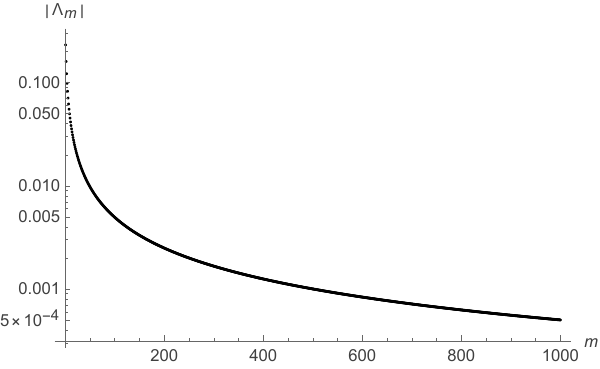}
\caption{Plot of $|\Lambda_m|$ vs.\ $m$ for \eqref{eq:double_factorial}.}
\label{fig:double_factorial}
\end{figure}
\end{example}

\begin{example}
Let $p_k$ be the $k$th prime. The \emph{primorial} of $p_k$ is defined as the product of all primes up to $p_k$, and is denoted as
\begin{equation*}
p_k\#:=\prod_{j=1}^k p_j.
\end{equation*}
Consider the sum of the reciprocals of the primorials
\begin{equation*}
\dfrac{1}{2}+\dfrac{1}{2\cdot 3}+\dfrac{1}{2\cdot 3\cdot 5}+\dfrac{1}{2\cdot 3\cdot 5\cdot 7}+\dfrac{1}{2\cdot 3\cdot 5\cdot 7\cdot 11}+\cdots=\sum_{k=1}^\infty\dfrac{1}{p_k\#}.
\end{equation*}
Here, we have $b_k=p_k\#$, and $\dfrac{b_k}{b_{k+1}}=\dfrac{1}{p_{k+1}}$. Since $\displaystyle{\lim_{k\to\infty}}p_{k+1}=\infty$, we may write $\dfrac{b_k}{b_{k+1}}=o(1)$. Hence, this prime-related constant is irrational. (For an alternative proof based on Cantor series, see \cite{griffiths}.)
\end{example}

\begin{example}\label{ex:favourite}
As a matter of fact, the $b_k$ may be any positive integer sequence which grows unbounded, whilst maintaining the divisibility chain and supergeometric growth (the third and fourth hypotheses in Theorem \ref{thm:main_theorem}, respectively). Thus, for instance, one may choose the $b_k$ as $(k!)^n$ ($n\in\mathbb{N}$), $2^{2k} (k!)^2$, the cumulative product of the central binomial coefficients $\displaystyle{\binom{2k}{k}}$, the cumulative product of the Fibonacci sequence (starting from $1$ and $2$), or one's personal favourite. While the first three hypotheses are trivial to verify, let us explicitly test these sequences for $b_k$ against the fourth hypothesis:
\begin{itemize}
\item For $b_k=(k!)^n$, $\dfrac{b_k}{b_{k+1}}=\dfrac{1}{(k+1)^n}=o(1)$.
\item For $b_k=2^{2k}(k!)^2$, $\dfrac{b_k}{b_{k+1}}=\dfrac{1}{4(k+1)^2}=o(1)$.
\item If $b_k$ is the cumulative product of the central binomial coefficients $\displaystyle{\binom{2k}{k}}$, then $\dfrac{b_k}{b_{k+1}}=\dfrac{1}{\displaystyle{\binom{2k+2}{k+1}}}=o(1)$.
\item If $b_k$ is the cumulative product of the Fibonacci numbers $F_k$ (with seed values $1$ and $2$), then $\dfrac{b_k}{b_{k+1}}=\dfrac{1}{F_{k+1}}=o(1)$.
\end{itemize}
\end{example}

\begin{example}\label{ex:mu}
A striking aspect of our framework is the complete freedom it provides to the signs of the $a_k$---a feat which arises due to the second hypothesis of Theorem \ref{thm:main_theorem}---as opposed to the methods discussed in Section \ref{sec:modern}. This means that given any series which abides by the hypotheses of Theorem \ref{thm:main_theorem}, we may alter the signs of any of the $a_k$ at will and still acquire irrationality. To this extent, consider the series
\begin{equation*}
\sum_{k=0}^\infty\dfrac{\mu(k)}{k!},
\end{equation*}
where $\mu$ is the M\"obius function \cite[Section 2.2]{apostol}, which assumes values $-1,0,1$. A quick inspection reveals that the hypotheses in Theorem \ref{thm:main_theorem} are obeyed after restricting to the subsequence for which $\mu(k)\neq 0$ (that is, by the remark in Section \ref{sec:constraints}). Hence, this series has an irrational limit.
\end{example}

\begin{example}
In fact, we may use for the $a_k$ any bounded integer sequence or number-theoretic function, such as the Liouville function $\lambda$, the magnitude of the Dirichlet character $\chi$, and the prime indicator function (for definitions, consult \cite{apostol}). A particularly curious case is
\begin{equation*}
\sum_{k=0}^\infty\dfrac{k\bmod n}{k!},\quad n\in\mathbb{N}\backslash\lbrace 1\rbrace,
\end{equation*}
where $k\bmod n$ is the residue modulo $n$ \cite[Section 4.2]{burton}, the set of residues taking values $0,1,\dots,n-1$. The remark in Section \ref{sec:constraints} guarantees that this series is irrational.
\end{example}

\begin{example}
Any rational number $x$ with a nonterminating decimal expansion may be represented as
\begin{equation*}
x=\sum_{k=0}^\infty\dfrac{d_k}{10^k},
\end{equation*}
where $d_0$ is the integer part and $d_k$ ($k\geq 1$) is the digit at the $k$th decimal place. Consider the following ``stretched" version of $x$:
\begin{equation}\label{eq:x_stretched}
x^*:=\sum_{k=0}^\infty\dfrac{d_k}{10^{k^2}}.
\end{equation}
For instance, if $x=\dfrac{1}{7}=0.\overline{142857}$, then
\begin{equation*}
x^*=0.100400002000000800000000500000000007\dots.
\end{equation*}
A quick tally with the hypotheses in Theorem \ref{thm:main_theorem} (and the remark in Section \ref{sec:constraints}) shows that $x^*$ is irrational. The plot in Figure \ref{fig:digits} apparently verifies this: observe the recurring pattern in the digits of $x$, and the dearth of a pattern in those of $x^*$.

\begin{figure}[h]
\centering
\includegraphics[width=0.6\textwidth]{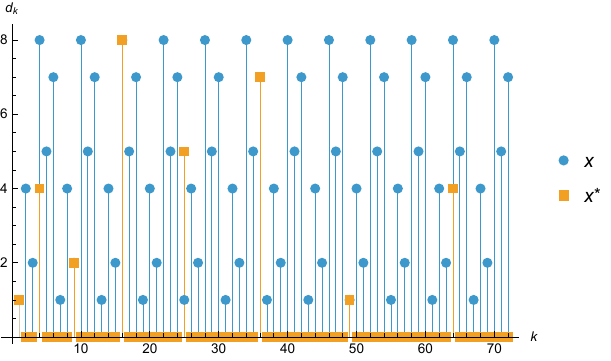}
\caption{Plot of $d_k$ vs.\ $k$ for $x=\dfrac{1}{7}$ (blue) and $x^*$ defined by \eqref{eq:x_stretched} (orange).}
\label{fig:digits}
\end{figure}
\end{example}

\begin{remark}
While Theorem \ref{thm:irrationality_criterion} is a standard criterion in Diophantine approximation, the main result of Theorem \ref{thm:main_theorem} is, to the best of our knowledge, novel. The hypotheses of the theorem encompass a wider class of constants than the methods discussed in Section \ref{sec:modern}. Perhaps the most conspicuous advantage of this framework is its agnosticism to the signs of the numerators $a_k$. Cantor and Engel series, on the other hand, are restricted to positive numerators, whereas Pierce series can only accommodate alternating signs. Of course, our framework generalizes two of these methods: an Engel series is obtained by setting $a_k\equiv 1$ with denominators $c_k=\dfrac{b_k}{b_{k-1}}$, and a Pierce series is obtained with $a_k=(-1)^k$. Whereas Cantor series restrict the range of the individual $a_k$ (without mandating a global bound), Theorem \ref{thm:main_theorem} only requires the $a_k$ to be globally bounded in magnitude. The next example substantiates this disparity between the two approaches.
\end{remark}

\begin{example}\label{ex:disparity}
In this example, we address the incomparability of Cantor series and Theorem \ref{thm:main_theorem}. Consider a Cantor series where $a_k=k$ and $c_k=k+2$:
\begin{equation*}
\dfrac{1}{3}+\dfrac{2}{3\cdot 4}+\dfrac{3}{3\cdot 4\cdot 5}+\dfrac{4}{3\cdot 4\cdot 5\cdot 6}+\cdots=\sum_{k=1}^\infty k\prod_{j=1}^k\dfrac{1}{j+2}.
\end{equation*}
By Cantor's theorem, this series is irrational. However, it clearly violates the second hypothesis of Theorem \ref{thm:main_theorem}. On the other hand, consider the following series with $a_k=(-1)^{k-1}$ and $b_k=\displaystyle{\prod_{j=1}^k} c_j$:
\begin{equation*}
\dfrac{1}{3}-\dfrac{1}{3\cdot 4}+\dfrac{1}{3\cdot 4\cdot 5}-\dfrac{1}{3\cdot 4\cdot 5\cdot 6}+\cdots=\sum_{k=1}^\infty (-1)^{k-1}\prod_{j=1}^k\dfrac{1}{j+2}.
\end{equation*}
In this case, all the hypotheses of Theorem \ref{thm:main_theorem} are satisfied, but it is not a valid Cantor series (owing to the negative signs).
\end{example}

\section{Further perspectives}\label{sec:perspectives}
The hypotheses listed in Theorem \ref{thm:main_theorem} specifically cater to numerators $a_k$ bounded in magnitude, and denominators $b_k$ exhibiting supergeometric growth. Therefore, any sufficiently ``fast-converging" series is guaranteed to be irrational. On the contrary, it is usually much harder to prove the irrationality of rather ``slow-converging" series (that is, series whose denominators increase subgeometrically). This includes, for instance, \emph{Dirichlet series} \cite[Chapter 11]{apostol}
\begin{equation}\label{eq:dirichlet_series}
\sum_{k=1}^\infty\dfrac{a_k}{k^s},\quad s\in\mathbb{N},
\end{equation}
whose denominators grow polynomially. Figure \ref{fig:denominator_growth_comparison} shows a semilog plot which visually compares different denominator growth rates. Notice how factorial growth eventually outpaces polynomial (the slowest) and geometric (exponential) growths. The region above the orange curve is the regime in which our irrationality criterion applies.

\begin{figure}[h]
\centering
\includegraphics[width=0.6\textwidth]{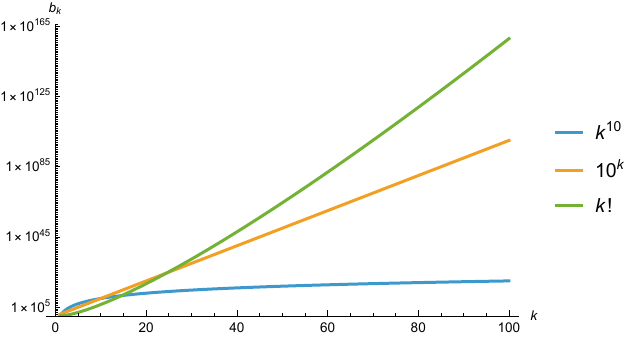}
\caption{Plot comparing different denominator growth rates. Note that the third and fourth hypotheses of Theorem \ref{thm:main_theorem} are explicitly satisfied in the region above the orange curve, while the fourth hypothesis is violated on and below it.}
\label{fig:denominator_growth_comparison}
\end{figure}

For $a_k\equiv 1$, \eqref{eq:dirichlet_series} yields the well-known \emph{Riemann zeta function}
\begin{equation*}
\zeta(s)=1+\dfrac{1}{2^s}+\dfrac{1}{3^s}+\cdots=\sum_{k=1}^\infty\dfrac{1}{k^s},\quad s\in\mathbb{N}\backslash\lbrace 1\rbrace,
\end{equation*}
whose irrationality is well-understood for even $s$. (Euler worked out the celebrated formula \cite[Section 1.5]{edwards}
\begin{equation*}
\zeta(2n)=\dfrac{(-1)^{n+1}B_{2n}(2\pi)^{2n}}{2(2n)!},\quad n\in\mathbb{N}
\end{equation*}
(where $B_n$ are the Bernoulli numbers), whereas it took Lambert, Legendre and Lindemann \cite{lindemann} to prove the irrationality of integral powers of $\pi$.) For odd $s$, however, only the case of $s=3$ has been settled (by Ap\'ery \cite{apery,poorten} and Beukers \cite{beukers}), and the question for $s\geq 5$ remains open (although it is known that infinitely many odd zeta values are irrational \cite{rivoal}, and that at least one of $\zeta(5),\zeta(7),\zeta(9),\zeta(11)$ is irrational \cite{zudilin}). Amidst this disappointment, one may find solace in knowing that the factorial analogue (or ``cousin") of the zeta function,
\begin{equation*}
\sum_{k=1}^\infty\dfrac{1}{(k!)^s},
\end{equation*}
is irrational (as mentioned in Example \ref{ex:favourite}). This series is actually a special case of the \emph{generalized hypergeometric series} \cite{bailey} $_0F_{s-1}$. One may alternatively try to prove the irrationality of the reciprocal of the zeta function
\begin{equation*}
\dfrac{1}{\zeta(s)}=\sum_{k=1}^\infty\dfrac{\mu(k)}{k^s},
\end{equation*}
whose factorial version (for $s=1$) is settled in Example \ref{ex:mu}, and may be easily generalized for arbitrary $s$ using our framework.

Another interesting open problem is the irrationality of the \emph{Dirichlet beta function}
\begin{equation*}
\beta(s)=1-\dfrac{1}{3^s}+\dfrac{1}{5^s}-\dfrac{1}{7^s}+\cdots=\sum_{k=0}^\infty\dfrac{(-1)^k}{(2k+1)^s},\quad s\in\mathbb{N},
\end{equation*}
for even $s$. A fascinating case of this is $\beta(2)$, often referred to as \emph{Catalan's constant} $G$:
\begin{equation*}
G=1-\dfrac{1}{3^2}+\dfrac{1}{5^2}-\dfrac{1}{7^2}+\cdots=\sum_{k=0}^\infty\dfrac{(-1)^k}{(2k+1)^2}.
\end{equation*}
In the words of Stewart \cite{stewart}, ``There is a rich and seemingly endless source of definite integrals that can be equated to or expressed in terms of Catalan's constant." As a result, it ``continues to remain one of the most inscrutable constants in mathematics where the question concerning its irrationality is not settled." Catalan's constant may be expressed in terms of Dirichlet characters as
\begin{equation*}
G=\sum_{k=1}^\infty\dfrac{\chi_4(k)}{k^2},
\end{equation*}
where $\chi_4$ is the primitive Dirichlet character modulo $4$, defined by
\begin{equation*}
\chi_4(k):=
\begin{cases}
1,&k\equiv 1\pmod 4,\\
-1,&k\equiv 3\pmod 4,\\
0,&k\text{ even}.
\end{cases}
\end{equation*}
Our framework successfully proves the irrationality of the related series
\begin{equation*}
\sum_{k=1}^\infty\dfrac{\chi_4(k)}{(k!)^2}.
\end{equation*}

A natural extension of our work is incorporating the case where the numerators $a_k$ are not necessarily bounded. For instance, a simple upgrade of the second hypothesis in Theorem \ref{thm:main_theorem} would be
\begin{equation*}
a_k=O\left(\left(\dfrac{b_k}{b_{k-1}}\right)^\theta\right),\quad 0\leq\theta <1.
\end{equation*}
Note that we recover the original hypothesis for the case $\theta=0$. On redoing the maths in Section \ref{sec:decay}, one checks that now, \eqref{eq:delta_m} yields
\begin{equation*}
\delta_m=a^*\dfrac{f(m)^\theta}{f(m)-1},
\end{equation*}
so \eqref{eq:Lambda_m_delta_m} and Lemma \ref{lem:nonvanishing} still hold, guaranteeing irrationality. For the special case of factorial denominators, we get $a_k=O(k^\theta)$. In this case, any function with sublinear growth may be substituted in the numerator. Examples include \cite[Section 22.10]{hardy}
\begin{itemize}
\item $\tau$, which counts the number of divisors ($\tau(k)=k^{o(1)}$).
\item $\omega$, which counts the number of prime factors ($\omega(k)=O(\log k)$).
\item $\Omega$, which counts the number of prime factors with multiplicity ($\Omega(k)=O(\log k)$).
\item $\lfloor\sqrt k\rfloor$, $\lfloor\log k\rfloor$.
\end{itemize}
Further (and rather dramatic) relaxation of the second hypothesis could have immediate consequences of utmost importance. As a token of motivation, consider the following expansion of $\pi$ (you may check it using any computer algebra system):
\begin{equation*}
\pi=2\sum_{k=0}^\infty\dfrac{k!}{(2k+1)!!}.
\end{equation*}
Adding \eqref{eq:e} and the above series, we obtain
\begin{equation*}
e+\pi=\sum_{k=0}^\infty\dfrac{(2k+1)!!+2(k!)^2}{k!\,(2k+1)!!}.
\end{equation*}
If the second hypothesis could be generalized enough so as to accommodate the above structure, it would provide potential insight into the longstanding problem of the irrationality of $e+\pi$.

To end on an amusing note, one may artificially create at will a most pathological series and still claim its irrationality. A ``good" example would be
\begin{equation*}
\sum_{k=1}^\infty\dfrac{(-1)^{k-1}\mu(k)\chi_4(k)(k\bmod 2026)}{2^{2k}(k!)^2 p_k\#}.
\end{equation*}

%
%
%

\section*{}
Archan Chattopadhyay\\
Howrah, West Bengal, India\\
\href{mailto:archanc@alum.iisc.ac.in}{archanc@alum.iisc.ac.in}\\
ORCID iD: 0000-0003-4132-4688
\\\\
Chandan Chattopadhyay\\
Department of Mathematics, Narasinha Dutt College\\
Howrah, West Bengal -- 711101, India\\
\href{mailto:chandanmath2011@gmail.com}{chandanmath2011@gmail.com}\\
ORCID iD: 0009-0008-0394-9146

\end{document}